\theoremstyle{plain}
\newtheorem{thm}{Theorem}[section]
\newtheorem{lem}[thm]{Lemma}
\newtheorem{cor}[thm]{Corollary}
\newtheorem{prop}[thm]{Proposition}
\theoremstyle{definition}
\newtheorem{ex}[thm]{Example}
\newtheorem{exs}[thm]{Examples}
\newtheorem{rem}[thm]{Remark}
\newtheorem{rems}[thm]{Remarks}
\newtheorem{dfn}[thm]{Definition}
\newcommand{\CC}{\mathbb{C}}       
\newcommand{\ZZ}{\mathbb{Z}}        
\newcommand{\QQ}{\mathbb{Q}}       
\newcommand{\NN}{\mathbb{N}}       
\newcommand{\kk}{k}                         
\newcommand{\lieh}{\mathfrak{h}}     
\newcommand{\lieg}{\mathfrak{g}}     
\newcommand{\ch}{\sp{\scriptscriptstyle\vee}}   
\newcommand{\JF}{\mathfrak{J}_F}
\newcommand{\FGL}{F}                  
\newcommand{\DF}{D_F}                    
\newcommand{\ADF}{\mathbf{D}_F}          
\newcommand{\Ruv}{R\llbracket u,v \rrbracket}
\newcommand{\RL}{R\llbracket \Lambda \rrbracket}
\newcommand{\RLF}{R\llbracket \Lambda \rrbracket_F}       
\newcommand{\Rhat}{\widehat{R}}
\newcommand{\Phireal}{\Phi^{re}}
\DeclareMathOperator{\im}{\mathrm{im}}    
\DeclareMathOperator{\GL}{GL}
\DeclareMathOperator{\Aut}{Aut}
\DeclareMathOperator{\rk}{rk}
\DeclareMathOperator{\Hom}{Hom}
\DeclareMathOperator{\ad}{ad}
\DeclareMathOperator{\id}{id}
\DeclareMathOperator{\cha}{char}
\DeclareMathOperator{\cork}{corank}
\DeclareMathOperator{\Span}{span}
\newcommand\sm{\smallskip}
\newcommand\lan{\langle} 
\newcommand\ran{\rangle}
\DeclareMathOperator{\rma}{A}
\DeclareMathOperator{\rmc}{C}
\newcommand\frh{\ensuremath{\mathfrak{h}}} 
\newcommand\al{\alpha}
\newcommand\be{\beta}
\newcommand\de{\delta} 
\newcommand\De{\Delta}
\newcommand\eps{\epsilon} 
\newcommand\la{\lambda} 
\newcommand\La{\Lambda}
\newcommand\om{\omega}
\title{The hyperbolic formal affine Demazure algebra}
\author{Marc-Antoine Leclerc}
\address{Department of Mathematics and Statistics, University of Ottawa, 585 King Edward, Ottawa ON K1N 6N5, Canada}
\email{mlecl084@uottawa.ca}
\urladdr{http://aix2.uottawa.ca/~mlecl084/} 
\thanks{This paper will be part of the author's PhD thesis written under the supervision of Erhard Neher and Kirill Zainoulline. It was partially supported by the NSERC Discovery grants of the supervisors.}
\subjclass[2010]{17B67, 20C08, 14F43}
\keywords{Kac-Moody algebra, Hecke algebra, formal group law, oriented cohomology}
\begin{document}

\begin{abstract}
In the present paper we extend the construction of the formal (affine) Demazure algebra due to Hoffnung, Malag\'{o}n-L\'{o}pez, Savage and Zainoulline in two directions. First, we introduce and study the notion of a formal Demazure lattice in the Kac-Moody setting and show that all the definitions and properties of the formal (affine) Demazure operators and algebras hold for such lattices. Second, we show that for the hyperbolic formal group law the formal Demazure algebra is isomorphic (after extending the coefficients) to the Hecke algebra.
\end{abstract}

\maketitle

\section{Introduction}

A series of papers \cite{HMSZ}, \cite{CZZ}, \cite{CZZ1}, \cite{CZZ2} by Calm\`{e}s, Hoffnung, Malag\'{o}n-L\'{o}pez, Savage, Zainoulline and Zhong generalized the Kostant-Kumar \cite{KK}, \cite{KK1} nil-Hecke approach to equivariant cohomology of flag varieties to the context of algebraic oriented theories in the sense of Levine-Morel, with the respective formal group laws and finite root systems. Namely, they introduced an algebra $\RLF$ called a formal group algebra which depends on a commutative ring $R$, a lattice $\La$ lying between the weight lattice and root lattice of a finite root system $\Phi$, and a formal group law $\FGL$; then they defined a generalized version of the Demazure and BGG-operators (see \cite{Dem} and \cite{BGG} respectively) acting on $\RLF$, the formal Demazure/Push-pull operators. Let $W$ be the Weyl group of $\Phi$ and let $Q^F$ be the quotient field of $\RLF$. Following the ideas of \cite{KK} they introduced the so-called formal twisted algebra $Q_W^F$, which is the smash product of the group algebra $R[W]$ and of $Q^F$. Finally, they proved that a subring $D_F$ (resp. $\ADF$) of $Q^F_W$ generated by the Demazure/Push-pull elements (and multiplications) is isomorphic to the (affine) nil-Hecke algebra. For related results in the topological context we refer to the papers \cite{BE}, \cite{Co} \cite{GR}, \cite{HHH} by Bressler, Cooper, Evans, Ganter, Harada, Henriques, Holm, and Ram.

In the present paper we extend the construction of Calm\`{e}s et al.~to an arbitrary Kac-Moody root system and the hyperbolic formal group law
\[\FGL_{\mu_1,\mu_2}(u,v) = \tfrac{u+v-\mu_1 uv}{1+\mu_2 uv},\quad \mu_1,\mu_2\in R.\]
The hyperbolic formal group law is a natural choice since both the additive (corresponding to usual cohomology) and the multiplicative (corresponding to $K$-theory) formal group laws can be obtained from it by specialization. It has been actively studied in the context of elliptic formal group laws by Buchstaber-Bunkova \cite{BB}, \cite{BB1} and it has a rich topological background as it corresponds to the celebrated 2-parameter Todd genus introduced and studied by Hirzebruch in \cite{Hirz}. Recently in \cite{LZ} and \cite{LZ1}, it was used to generalize the root polynomial approach of Billey-Graham-Willems to Schubert calculus. 

This paper is structured as follows. In section~2, we recall the definitions of a commutative formal group law, formal group algebra, and facts concerning Kac-Moody root systems. In section~3, we introduce an analogue of the intermediate lattice $\La$ in the Kac-Moody setup, called the formal Demazure lattice, see Definition~\ref{dfn:extlattice}. In section~4, we show that all the definitions and properties of the formal (affine) Demazure operators and algebras hold for such a lattice, see \ref{relationsDF}. Finally, in section~5, we prove that for the hyperbolic formal group law the algebra $D_F$ is isomorphic (after extending the coefficients) to the Hecke algebra of the Weyl group of a Kac-Moody root system (see Theorem~\ref{thm:main}), thereby generalizing \cite[Prop.~9.2]{CZZ2}. \sm

{\it Acknowledgements.} I am grateful to my supervisors E.~Neher and K.~Zainoulline for their help and support and for introducing me to the subject. I would also like to thank Changlong Zhong for his useful comments. I am also very thankful to the referee for his or her comments and suggestions.

\section{Preliminaries}\label{sec:prel}

\subsection*{Hyperbolic formal group algebra}
A \emph{one-dimensional commutative formal group law} over a commutative unital ring $R$ is a power series $F(u,v) \in \Ruv$ such that (see \cite[p.1]{Haz})
\[
\FGL(\FGL(u,v),w) = \FGL(u,\FGL(v,w)), \quad
\FGL(u,v) = \FGL(v,u), \quad
\FGL(u,0) = u.
\]
The \emph{inverse} of $\FGL(u,v)$ is the unique power series $G(t) \in R\llbracket t \rrbracket$ such that $F(u,G(u)) = 0$ (see \cite[Appendix A.4.7]{Haz} for a proof). For simplicity we will write $u+_F v :=\FGL(u,v)$ and $-_F u := G(u)$. By the very definition of $F$ we have
\[
\FGL(u,v)=u+v +\sum_{i,j\geq 1} c_{ij}u^iv^j,\quad \text{where}\; c_{ij}\in R.
\]

Our central example is the following (see \cite[Example~63]{BB}, \cite[Corollary~3.8]{BB1})
\begin{ex} 
Let $\mu_1,\mu_2 \in R$. The \emph{hyperbolic 2-parameter formal group law} is defined as
\[
\FGL_{\mu_1,\mu_2}(u,v) = \tfrac{u+v-\mu_1 uv}{1+\mu_2 uv}= (u+v-\mu_1 uv)(\sum_{i\geq 0} (-\mu_2uv)^i).
\]
By \cite[p.~3,8]{BB}, its exponential is given by $\exp_{F_{\mu_1,\mu_2}}(u)=\tfrac{e^{\al u}-e^{\be u}}{\al e^{\al u} - \be e^{\be u}}$ where $\mu_1 =\al + \be$, $\mu_2= - \al \be$, hence the name hyperbolic formal group law. If $\mu_1=\mu_2=0$ (resp.\ $\mu_2=0, \mu_1 \in R^{\times}$) we get the \emph{additive formal group law} $F_a(u,v) = u+v$ (resp. the \emph{multiplicative periodic formal group law} $F_m(u,v) = u+v-\mu_1 uv$).

The inverse of $\FGL_{\mu_1,\mu_2}(u,v)$ is the same as for the multiplicative one, i.e.
\[
-_{F_{\mu_1,\mu_2}}u=- \sum_{n\geq 0} (-\mu_1)^nu^{n+1}.
\]

\end{ex}

Following \cite[Definition~2.4]{CPZ} let $\La$ be an abelian group and let $\FGL(u,v)$ be a one-dimensional formal group law. Let $R[x_{\la}]_{\la \in \La}$ be the ring of polynomials with indeterminates $x_{\la}$ for $\la \in \La$. Let $\eps \colon R[x_{\la}]_{\la \in \La} \to R$,  $x_{\la} \mapsto 0$ be the \emph{augmentation map}. Let $R\llbracket x_{\la}\rrbracket_{\la \in \La} := \Rhat_{\ker(\eps)}$ be the completion of $R[x_{\la}]_{\la \in \La}$ with respect to the ideal $\ker(\eps)$. Let $\JF$ be the closure in the $\ker(\eps)$-adic topology of the ideal of $R\llbracket x_{\la}\rrbracket_{\la \in \La}$  generated by $x_0$ and $x_{\la_1+\la_2} - (x_{\la_1} +_F x_{\la_2})$ for all $\la_1,\la_2 \in \La$. The quotient algebra
\begin{equation*}
\RLF = R\llbracket x_{\la}\rrbracket_{\la \in \La}/\JF
\end{equation*}
is called the \emph{formal group algebra}. If $F=F_{\mu_1,\mu_2}$ the corresponding formal group algebra will be called \emph{hyperbolic}. It is a unital associative commutative $R$-algebra.

By definition we have $x_{\la_1+\la_2} = x_{\la_1} +_F x_{\la_2}$ in $\RLF$. Observe that if $\La$ is free abelian of rank $n$, then $\RLF$ is  isomorphic (non-canonically) to the ring of formal power series $R\llbracket x_1,\ldots,x_n\rrbracket$ (we refer to \cite[\S2]{CPZ} for further properties and examples of formal group algebras). Note that we take the completion of $R[x_{\la}]_{\la \in \La}$ since in general the formal group law is a formal power series. 

\subsection*{Kac-Moody root systems}
Following \cite{K}, \cite{Ku}, \cite{MP}, and \cite{R}, let $I=\{1,...,l\}$ for some $l \in \NN$ and let $A=(a_{ij})_{i,j \in I}$ for $a_{ij} \in \ZZ$ be a \emph{generalized Cartan matrix}, i.e. $a_{ii}=2$, $a_{ij}\leq 0$ for $i\neq j$ and $a_{ij}=0$ implies $a_{ji}=0$. Choose a triple $(\lieh, \Pi, \Pi\ch )$ where $\lieh$ is a vector space over a base field $\kk$ ($\cha(\kk)=0$) of dimension $2l-\rk(A)$, $\Pi=\{\al_i \mid i\in I\} \subset \lieh^*$ and $\Pi\ch = \{\al_i\ch \mid i\in I \} \subset \lieh$ are linearly independent sets satisfying $\al_j(\al_i\ch)=a_{ij}$. This triple is unique up to a canonical isomorphism. The \emph{Kac-Moody algebra} $\lieg=\lieg(A)$, in the sense of \cite{Ku}, \cite{MP}, and  \cite{R}, is the Lie algebra over $\kk$, generated by $h \in \lieh$ and symbols $e_i$ and $f_i$ ($i\in I$) with the defining relations
\[
[\lieh,\lieh] = 0, \quad [h,e_i] =\al_i(h) e_i,\quad [h,f_i] = -\al_i(h) f_i,\quad [e_i,f_j] = \de_{ij} \al_i\ch,
\]
for all $h\in \lieh$ and $(\ad e_i)^{1-a_{ij}} (e_j)=0$, $(\ad f_i)^{1-a_{ij}} (f_j)=0$ for $i\neq j$. Note that this definition of a Kac-Moody algebra is equivalent to the definition of \cite{K} if the generalized Cartan matrix is symmetrizable.

Let $\La_r = \bigoplus_{i\in I} \ZZ \al_i \subset \lieh^*$ be the \emph{root lattice} (the root lattice is denoted by $Q$ in \cite{Ku}, here we follow the notation of \cite{T}). We have a root space decomposition
\[
\lieg = \lieh \oplus \sum_{\al \in \Phi} \lieg_{\al}
\]
where $\lieg_{\al} = \{ x \in \lieg \mid [h,x] = \al(h)x, \forall h\in \lieh \}$ and $\Phi =\{\al \in \La_r \setminus \{0\} \mid \lieg_{\al} \neq 0\}$. The set $\Phi$ is the \emph{Kac-Moody root system} corresponding to $\lieg$ with \emph{simple roots} $\al_i$ and \emph{simple coroots} $\al_i\ch$. 

For any $i\in I$, let $s_i \in \Aut(\lieh^*)$ be defined as $s_i(\phi) = \phi - \phi(\al_i\ch)\al_i$ for $\phi \in \lieh^*$. Let $W\subset \Aut(\lieh^*)$ be the subgroup generated by $\{s_i \mid i\in I\}$, called the Weyl group of $\lieg$. According to \cite[Prop.~3.13]{K} or \cite[Prop.~1.3.21]{Ku}, $W=\langle s_i\rangle_{1\leq i \leq l}$ is a Coxeter group where the order $m_{ij}$ of $s_is_j$ ($i,j \in I, i\neq j$) is given as follows:

\begin{center}
\begin{tabular}{|c||c|c|c|c|c|}
\hline
$a_{ij}a_{ji}$ & 0 & 1 & 2 & 3 & $\geq 4$ \\
\hline
$m_{ij}$ & 2 & 3 & 4 & 6 & $\infty$ \\
\hline
\end{tabular}
\end{center}

\sm

A root $\al \in \Phi$ is called \emph{real} if there exist $w\in W$ such that $\al=w(\al_i)$ for some simple root $\al_i$. We denote the set of real roots by $\Phireal$. For any $\al \in \Phireal$, we have $\al=w(\al_i)$ for some $w\in W$, $\al_i \in \Pi$ and we define $\al \ch := w(\al_i \ch)$. One can show that this is well-defined. Then for any $\be \in \Phi$ and $\al \in \Phi^{re}$ there exists $s_{\al} \in \GL(\lieh^*)$ such that $s_{\al}(\be) = \be - \langle \be, \al\ch \rangle \al$ and $s_{\al} \in W$ since $s_{\al}=ws_{\al_i}w^{-1}$.

\subsection*{Classification}
By the fundamental result of E.B.~Vinberg \emph{\cite{V}}, Kac-Moody root systems can be classified as follows. Let $A$ be an indecomposable generalized Cartan matrix. Then one and only one of the following three possibilities holds for $A$ \cite[p.~48]{K}:
\begin{description}
\item[(Fin)] $\det(A) \neq 0$; there exists $u>0$ such that $Au>0$; $Av \geq 0$ implies $v\geq 0$.
\item[(Aff)] $\cork(A)=1$; there exists $u>0$ such that $Au=0$; $Av \geq 0$ implies $Av=0$.
\item[(Ind)] there exists $u>0$ such that $Au<0$; $Av \geq 0, v\geq 0$ imply $v=0$.
\end{description}
We will say that a Kac-Moody root system is of \emph{finite, affine or indefinite type} if the corresponding generalized Cartan matrix satisfies (Fin), (Aff) or (Ind) respectively.

The Kac-Moody root systems of finite type correspond to the root systems of finite-dimensional semisimple Lie algebras since these are the corresponding Kac-Moody algebras. There is a complete classification of Kac-Moody root systems of affine type in terms of affine Dynkin diagrams (see \cite[p.~54-55]{K}).

For a Kac-Moody root system of affine type, it is traditional to start numbering the simple roots at 0, i.e. the root basis is $\Pi=\{\al_i \mid i\in I\}$ with $I=\{0,...,l\}$. In this case, there exists a unique element $\de \in \La_r$ called \emph{null root} (which will play an important
role in our arguments) defined as $\de = \sum_{i=0}^l a_i \al_i$ where the $a_i$'s are the numerical labels of each node in the Dynkin diagram of \cite[p.~54-55]{K}. Note that $\al_0$ is not arbitrary since $\al_0=a_0^{-1}(\delta-\theta)$ where $\theta$ is the unique highest root of the underlying root system of finite type. By definition, for $i \in I$ we have \[\langle \de, \al_i\ch \rangle =0.\] Moreover, there exist an element $d^* \in \lieh^*$ (defined up to a summand proportional to $\de$) such that \begin{equation}\label{def:d} \langle d^*,\al_0\ch \rangle = 1, \quad \text{ and } \langle d^*, \al_j\ch \rangle =0\end{equation} for $0 \leq i \leq l, 1 \leq j \leq l$.

\section{Formal Demazure lattices}

Note that in the case of a finite root system with root lattice $\La_r$ and root basis $\Pi = \{ \al_i \mid i \in I\}$ the weight lattice 
\[
\La_w = \{\la \in \lieh^* \mid \langle \la, \La_r \ch \rangle \subset \ZZ \} = \{\la \in \frh^* \mid \lan \la , \al_i\ch\ran \in \ZZ \text{ for all } i\in I\}
\]  
plays an important role in the classification and representation theory of the respective algebras/groups up to isogeny. Namely, each isogeny class of algebraic groups corresponds to an intermediate lattice $\La$, $\La_r \subseteq \La \subseteq \La_w$ with $\La=\La_r$ (resp.\ $\La=\La_w$) being the adjoint (resp.\ simply-connected) class. Moreover, these intermediate lattices are the initial data in the construction of the equivariant oriented cohomology ring of flag varieties of \cite{HMSZ} and \cite{CZZ}. As the main purpose of the present paper is to extend these results to the Kac-Moody case, a natural question arises of what should be taken as an analogue of the intermediate lattice $\La$ or, equivalently, to which lattices $\La\supseteq \La_r$ in the Kac-Moody case can one extend the calculus of formal Demazure and BGG operators of \cite{HMSZ} and \cite{CZZ}.

Let now $\Phi$ be a Kac-Moody root system and let $\Pi= \{ \al_i \mid i\in I\}$ be a set of simple roots. We use the notation introduced in \S\ref{sec:prel}. We call $\{ \om_i \mid i \in I\} \subset \frh^*$ a {\em set of fundamental weights} if $\lan \om_i,  \al_j \ch \ran = \de_{ij}$. Since in general the $\al_i\ch$ do not span $\frh$, a set of fundamental weights is not uniquely determined by the data defining $\Phi$. For example, if $\Phi$ is of affine type and $\{\om_i \mid i \in I\}$ is a set of fundamental weights, then so is $\{ \om_i + n \de \mid i \in I\}$ for any $n\in \kk$ since  $\langle \de,\al_j\ch \rangle = 0$. This non-uniqueness is the reason why the {\em standard weight lattice} $\La_w$ may be too large for our purposes. The following notion provides a candidate for a replacement of $\La_w$.

\begin{dfn}[Formal Demazure lattices]\label{dfn:extlattice}
We call a finitely generated free subgroup $\La$ of $(\lieh^*, +)$ a \emph{formal Demazure lattice} if it has the following properties:
\begin{description}

\item[(FDL1)] every simple root can be extended to a basis of $\La$, in particular $\La_r \subset \La$,

\item[(FDL2)] $\langle \La, \al_i\ch \rangle \subset \ZZ, \forall \al_i \in \Pi$, i.e., $\La \subset \La_w$.
\end{description}

Let $\La$ and $\La'$ be formal Demazure lattices. By definition, a {\em morphism} of formal Demazure lattices is a  $\ZZ$-module homomorphism $\phi\colon \La \to \La'$ satisfying $\phi(\La_r) \subset
\La_r$. Together with this notion of a morphism, formal Demazure lattices form a category. In particular, we have the notion of an isomorphism of formal Demazure lattices.
\end{dfn}

\begin{rems}\label{rem1}
(a) A formal Demazure lattice $\Lambda$ is not necessarily a lattice of $\lieh^*$ in the usual sense since a basis of $\Lambda$ need not generate $\lieh^*$. This small abuse of language is traditional.

(b) By \cite[VII, \S4.2, Lemme~1]{Bo}, cf.\ \cite[Lemma~12.7]{CZZ}, the condition (FDL1) is equivalent to either one of the conditions (FDL1') or (FDL1''):

\begin{description}
  \item[(FDL1')] The coordinates of every simple root with respect to some
      $\ZZ$-basis of $\La$ form a unimodular vector.
  \item[(FDL1'')] The coordinates of every simple root with respect to
      every $\ZZ$-basis of $\La$ form a unimodular vector.
\end{description}

(c) Obviously, the root lattice $\La_r = \bigoplus_i \ZZ \al_i$ is a so-called {\em trivial} formal Demazure lattice. We are interested in bigger formal Demazure lattices. Examples will be given in \ref{ex:E} and \ref{ex:A}.
\end{rems}

\begin{lem}
A formal Demazure lattice has the following properties.
\begin{enumerate}
\item[(i)] $w(\La) = \La$ for all $w \in W$,
\item[(ii)] $\langle \La, \al \ch \rangle \subset \ZZ$ for any real root $\al \in \Phireal$,
\item[(iii)] every real root can be extended to a basis of $\La$.
\end{enumerate} \label{formal Demazure:real:roots}
\end{lem}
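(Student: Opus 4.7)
The plan is to prove the three properties in the given order, exploiting that the Weyl group is generated by the simple reflections $s_i$ and that each $s_i$ acts on $\lieh^*$ by the formula $s_i(\phi) = \phi - \langle \phi, \al_i\ch\rangle \al_i$.

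For (i), I would first show that $s_i(\La) \subseteq \La$ for each $i \in I$. Take $\la \in \La$; then $s_i(\la) = \la - \langle \la, \al_i\ch\rangle \al_i$. By (FDL2), $\langle \la, \al_i\ch\rangle \in \ZZ$, and by (FDL1) we have $\al_i \in \La_r \subset \La$, so $s_i(\la) \in \La$. Since $s_i$ is an involution, $s_i(\La) = \La$, and since the $s_i$ generate $W$, we conclude $w(\La) = \La$ for all $w \in W$.

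For (ii), given a real root $\al \in \Phireal$, write $\al = w(\al_i)$ for some $w \in W$ and $\al_i \in \Pi$, so that $\al\ch = w(\al_i\ch)$ by the definition of the coroot of a real root. For $\la \in \La$, the standard adjointness formula gives
\[
\langle \la, \al\ch\rangle \;=\; \langle \la, w(\al_i\ch)\rangle \;=\; \langle w^{-1}(\la), \al_i\ch\rangle.
\]
By (i), $w^{-1}(\la) \in \La$, and (FDL2) then forces this pairing to lie in $\ZZ$.

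For (iii), I would use the characterization (FDL1'') from Remark~\ref{rem1}(b): a vector in $\La$ extends to a $\ZZ$-basis of $\La$ if and only if its coordinates with respect to any (equivalently, every) $\ZZ$-basis of $\La$ form a unimodular vector. Given $\al = w(\al_i) \in \Phireal$, pick any $\ZZ$-basis $B$ of $\La$; by (i), $w(B)$ is also a $\ZZ$-basis of $\La$. The coordinates of $\al = w(\al_i)$ with respect to $w(B)$ coincide with the coordinates of $\al_i$ with respect to $B$, which are unimodular by (FDL1''). Hence $\al$ has unimodular coordinates with respect to $w(B)$, and therefore extends to a basis of $\La$ by (FDL1'') again.

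The only mild obstacle is (iii), where one must be careful to invoke the basis-independent formulation (FDL1'') rather than (FDL1) directly, since $w(B)$ is in general a different basis than the one originally chosen to witness the unimodularity of $\al_i$; once this is observed the argument is immediate. Parts (i) and (ii) are routine applications of the defining properties (FDL1) and (FDL2).
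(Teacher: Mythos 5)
Your proposal is correct and follows essentially the same route as the paper: (i) and (ii) are argued identically, and (iii) rests on the same key observation that, by (i), $w$ carries a $\ZZ$-basis of $\La$ to a $\ZZ$-basis of $\La$. The only difference is that in (iii) the paper argues more directly---completing $\al_i$ to a basis $B$ via (FDL1) and noting that $w(B)$ is then a basis containing $\al$---so your detour through the unimodular-coordinate criterion (FDL1''), while valid, is not needed.
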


\begin{proof}
(i) Let $\la \in \La$. Let $s_{\al_i} \in W$ be a simple reflection. Then $s_{\al_i}(\la)= \la - \langle \la, \al_i \ch \rangle \al_i = \la - c\al_i$ for some $c \in \ZZ$ by (FDL2). Since by (FDL1) we have $\La_r \subset \La$, $\al_i \in \La$ so $c\al_i \in \La$ and $s_{\al_i}(\la) \in \La$. Since $W$ is generated by simple reflections and $s_{\al_i}^2=\id$, we have $w(\La) = \La$ for any $w \in W$.

(ii) Let $\la \in \La$ and let $\al \in \Phireal$. Then there exist $\al_i \in \Pi$ and $w \in W$ such that $w(\al_i)=\al$. Therefore, by (FDL2), $\langle \la, \al \ch \rangle = \langle \la, (w\al_i))\ch \rangle = \langle w^{-1}(\la), \al_i\ch \rangle \in \ZZ$ by (i).

(iii) Let $\al \in \Phireal$. As before, $\al = w(\al_i)$ for some $w\in W$ and some simple root $\al_i$. By (FDL1), $\al_i$ can be completed to a basis $B$ of $\La$. Then $w(B)$ is a basis of $\La$ containing $\al$ and $w(B) \subset \Lambda$ by (a).
\end{proof}

\begin{ex}[$\Phi = \rma_1^{(1)}$]\label{ex:E}
Let $\Phi$ be the affine root system $\rma^{(1)}_1$, associated with the generalized Cartan matrix $A = \begin{tiny} \begin{pmatrix} 2 & -2 \\ -2 & 2 \end{pmatrix} \end{tiny}$ of affine type. We will describe all formal Demazure lattices which are lattices (in the usual sense) in $\QQ \al_0 \oplus \QQ \al_1$.

Any lattice $\La$ of $\QQ \al_0 \oplus \QQ \al_1$ has the form $\ZZ \la_0 \oplus \ZZ \la_1$ for $\QQ$-linearly independent $\la_i$. Hence there exists an invertible matrix
\[ 
B= \begin{pmatrix}
  a & b \\ c & d\end{pmatrix} \in \GL_2(\QQ)
\]
such that $\la_0 = a \al_0 + c \al_1$ and $\la_1 = b \al_0 + d \al_1$. For such a lattice $\La$ the condition $\al_i \in \La$, which is part of (FDL1), holds if and only if $B^{-1}\in {\rm Mat}_2(\ZZ)$, i.e.
\begin{equation}\label{ex:E1}
    B = \textstyle \frac{1}{\De'} B', \quad \hbox{where }
     B' = \begin{pmatrix}
      a' & b' \\ c' & d' \end{pmatrix} \in {\rm Mat}_2(\ZZ) \hbox{ and }
          \De' = \det(B').
\end{equation}

Since the coordinates of $\al_0$ and $\al_1$ with respect to the basis $\{\la_0, \la_1\}$ of $\La$ are $(d'\ -c')$ and $(-b'\ a')$ respectively, it follows that
\begin{equation}\label{ex:E3}
 \hbox{(FDL1)} \qquad \iff \qquad \hbox{$(a'\ b')$ and $(c'\ d')$ are unimodular.}
\end{equation}
Moreover we have
\[
   \lan \la_0, \al_0\ch\ran = a \lan \al_0,
\al_0\ch\ran + c \lan \al_1, \al_0\ch\ran = 2 (a-c) =\textstyle  \frac{2}{\De'} (a'-c')
   = - \lan \la_0, \al_1\ran, 
 \]
And similarly, $\lan \la_1, \al_0\ch\ran = 2 (b-d) = \frac{2}{\De'}(b'-d') = - \lan \la_1, \al_1\ch\ran$. Thus
\begin{equation}\label{ex:E2}
 \hbox{(FDL2)} \quad \iff \quad a'-c'\in \textstyle
\frac{\De'}{2} \ZZ
   \quad \hbox{and} \quad d'-b' \in \frac{\De'}{2} \ZZ.
\end{equation}
For example, a matrix $B'$ with $c'=0$ satisfies the conditions \eqref{ex:E2} and \eqref{ex:E3} if and only if there exists $m\in \ZZ$ such that
\[
   B' = \begin{pmatrix} a' & \eps(1 - \frac{a'}{2}m) \\ 0 & \eps   \end{pmatrix},
\quad \eps = \pm 1, \quad \hbox{and} \quad
    (a', \ 1-\textstyle\frac{a'}{2} m) \hbox{ is unimodular.}
\]
Equivalently, the first row of $B'$ is either $(n,\epsilon(1-nm)) $ or $(4n, \epsilon(1-2nm))$ for some integers $n$ and $m$. In particular, for every $n \in \ZZ\setminus \{0\}$ the matrix
\[
    B_n = \tfrac{1}{4n} \begin{pmatrix} 4n & 1+2n \\ 0 & 1 \end{pmatrix}
\]
satisfies the conditions \eqref{ex:E3} and \eqref{ex:E2} and thus defines a formal Demazure lattice $\La_n$, namely
\begin{equation} \label{ex:E4}
 \La_n = \ZZ \al_0 \oplus \ZZ
((\tfrac{1}{4n}+\tfrac{1}{2})\al_0 + \tfrac{1}{4n} \al_1)
\end{equation}
\end{ex}

We will study these formal Demazure lattices in the following lemma.

\begin{lem} \label{lem:ex:e}
Let $n,m \in \ZZ \setminus \{0\}$ and let $\Lambda_n$ and $\Lambda_m$ be defined by \emph{(\ref{ex:E4})}.

{\rm (a)} Then $\La_m \subset \La_n$ if and only if $\tfrac{n}{m}$ is an odd integer. Hence we have an infinite chain of formal Demazure lattices
\[
   \La_1 \subsetneq \La_3 \subsetneq \La_9 \subsetneq \cdots
\]

{\rm (b)} $\La_n \cong \La_m$ if and only if $n=m$.
\end{lem}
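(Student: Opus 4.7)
The plan is to treat both parts by direct computation in the basis $\{\alpha_0,\alpha_1\}$, using that $\Lambda_n = \ZZ\al_0 \oplus \ZZ\beta_n$ where I write $\beta_n := (\tfrac{1}{4n}+\tfrac12)\al_0 + \tfrac{1}{4n}\al_1$. Since $\alpha_0$ already lies in $\Lambda_n$, the inclusion $\La_m\subset\La_n$ reduces to the single membership $\beta_m\in\La_n$.

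For (a), I would write $\beta_m = a\al_0 + b\beta_n$ with $a,b\in\ZZ$ and read off coefficients. The $\al_1$-coefficient forces $\tfrac{1}{4m} = \tfrac{b}{4n}$, so $b=n/m$ must be an integer, i.e.\ $m\mid n$; setting $n=km$, the $\al_0$-coefficient then yields $a = (1-k)/2$, which is an integer exactly when $k$ is odd. This is the stated criterion, and conversely any such $a,b$ produce the desired expression, so $\beta_m\in\La_n$. The infinite chain $\La_1\subsetneq\La_3\subsetneq\La_9\subsetneq\cdots$ then follows because each ratio $3^{i+1}/3^i = 3$ is an odd integer, whereas the reverse ratios $3^i/3^{i+1} = 1/3$ are not even integers, so all inclusions are strict.

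For (b), I would argue via the finite quotient $\La_n/\La_r$. From $\beta_n = \tfrac{2n+1}{4n}\al_0 + \tfrac{1}{4n}\al_1$ one obtains $\al_1 = 4n\beta_n - (2n+1)\al_0$; thus, in the basis $\{\al_0,\beta_n\}$ of $\La_n$, the inclusion $\La_r \hookrightarrow \La_n$ has matrix determinant $\pm 4n$, so $\La_n/\La_r$ is cyclic of order $4|n|$. Any isomorphism of formal Demazure lattices $\phi\colon\La_n\xrightarrow{\sim}\La_m$ must send $\La_r$ into $\La_r$ in both directions, so $\phi(\La_r)=\La_r$, and hence induces an isomorphism $\La_n/\La_r\xrightarrow{\sim}\La_m/\La_r$, forcing $|n|=|m|$. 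Combined with the identity $\beta_{-n} = \al_0 - \beta_n$, which gives $\La_{-n}=\La_n$ as subgroups of $\lieh^*$, one recovers $\La_n=\La_m$ as subgroups, proving the claim.

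The main obstacle is conceptual rather than computational: the sign issue. The easy observation $\La_n=\La_{-n}$ means the statement only makes sense once one identifies $\La_n$ with $\La_{|n|}$, and a careful writeup should flag this, since otherwise the quotient argument gives only $|n|=|m|$ rather than $n=m$. Once that is reconciled (either by restricting to $n,m>0$ or by viewing $\La_n$ and $\La_{-n}$ as the same subgroup), the remainder is routine.
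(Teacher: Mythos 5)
Your proof is correct and follows essentially the same route as the paper: part (a) by solving $\beta_m=a\al_0+b\beta_n$ for integer $a,b$, and part (b) via the cyclic quotient $\La_n/\La_r$ of order $4|n|$. Your additional observation about signs is a genuine (if minor) catch: since $\beta_{-n}=\al_0-\beta_n$ gives $\La_{-n}=\La_n$, the statement of (b) is literally false for $m=-n$ under the hypothesis $n,m\in\ZZ\setminus\{0\}$, and the paper's own argument (like yours) only yields $|n|=|m|$; the lemma should either restrict to $n,m>0$ or conclude $|n|=|m|$, equivalently $\La_n=\La_m$ as subgroups.
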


\begin{proof} 
(a) We have $(\tfrac{1}{4m}+\tfrac{1}{2})\al_0 + \tfrac{1}{4m} \al_1 \in \La_n $ $\Leftrightarrow$ $\exists a,b \in \ZZ$ such that  $a\al_0 + b((\tfrac{1}{4n}+\tfrac{1}{2})\al_0 + \tfrac{1}{4n} \al_1)) = (\tfrac{1}{4m}+\tfrac{1}{2})\al_0 + \tfrac{1}{4m} \al_1$ $\Leftrightarrow$ $a+b(\tfrac{1}{4n})=\tfrac{1}{4m}+\tfrac{1}{2} \text{ and } b(\tfrac{1}{4n})=\tfrac{1}{4m}$  $\Leftrightarrow$ $\tfrac{n}{m} \in \ZZ \text{ and } 2\mid(1-\tfrac{n}{m})$ which implies the result.

(b) Since $(\tfrac{1}{4n}+\tfrac{1}{2})\al_0 + \tfrac{1}{4n} \al_1$ is of order $4n$ in $\La/\La_r$, we have $\La_n/\La_r \cong \ZZ_{4n}$, hence $\La_n \cong \La_m$ if and only if $n=m$.
\end{proof}

\begin{exs}\label{ex:A} 
(i) We consider again the root system $\Phi$ of Example~\ref{ex:E}. If $\La \subset \kk \al_0 \oplus \kk \al_1$ is a formal Demazure lattice,  then so is $\ZZ d^* \oplus \La$. Indeed, since $\frh^* = \kk d \oplus \kk \al_0 \oplus \kk \al_1$, this follows immediately from the definition and \eqref{def:d}. \sm

(ii) Let $\Phi$ be an affine root system. Recall the null root $\de$ defined in section 2.2. If
\begin{equation}
  \label{ex:A1} \hbox{there exists a pair $i,j$, $i\neq j$ such that $a_i=a_j=1$},
\end{equation}
then for any $m\in \ZZ \setminus \{0\}$
\[
\La = \ZZ
d^* + \ZZ \al_0 + \cdots +\ZZ\al_{i-1} +  \ZZ \tfrac{1}{m} \de +
    \ZZ \al_{i+1} + \cdots + \ZZ \al_l
\]
is a formal Demazure lattice since $$ \al_i = 0\cdot d^* + (-a_0) \al_0 + \cdots + (-a_l) \al_l + m (\tfrac{1}{m} \de)$$ and $(0,-a_0,\ldots , -a_l, m)$ is unimodular. The condition \eqref{ex:A1} is fulfilled for $\Phi$ of type $A^{(1)}_l$, $B^{(1)}_l$,  $C^{(1)}_l$, $D^{(1)}_l$, $E^{(1)}_6$, $E^{(1)}_7$, $A^{(2)}_{2l-1}$, $D^{(2)}_{l+1}$, $E^{(2)}_6$ and $D^{(3)}_4$. 

For case $G^{(1)}_2$, $\La = \ZZ \tfrac{1}{m} \de + \ZZ \al_1 + \ZZ \al_2$ is a formal Demazure lattice for any $m\in \ZZ$ since $\de= \al_0 +2 \al_1 + 3 \al_2$ and $(m,-2,-3)$ is unimodular. 

Similarly, for case $F^{(1)}_4$ and $E^{(1)}_8$, $\La = \ZZ \tfrac{1}{m} \de + \ZZ \al_1 + \ZZ \al_2 + \cdots$ is a formal Demazure lattice since $a_1=2, a_2=3$ and $(m,-2,-3,\ldots )$ is unimodular. 

For the two remaining cases, $A^{(2)}_2$ and $A^{(2)}_{2l}$, $\La = \ZZ \al_0 + \cdots + \ZZ \tfrac{1}{m} \delta$ is a formal Demazure lattice if $m$ is an odd integer. 

(iii) Let $\Phi$ be an arbitrary Kac-Moody root system and let $\{\omega_i \mid i\in I\}$ be a set of fundamental weights. Then $\La= \sum_{i\in I} \ZZ \omega_i$ is a formal Demazure lattice unless $\al_i (\al_j\ch) \in 2\ZZ$ for some $j$ and for all $i\in I$ which happens if and only if we have a column of even integers in the generalized Cartan matrix corresponding to $\Phi$. It is a formal Demazure lattice since $\al_i = \sum_{j\in I} \al_i(\al_j\ch) \omega_j$ and $( \al_i (\al_j\ch))_{j\in J}$ is a unimodular vector so that (FDL1') holds. \sm

(iv) We can also have formal Demazure lattices for root systems of indefinite type. For example, let $A = \begin{tiny} \begin{pmatrix} 2 & -4 \\ -4 & 2 \end{pmatrix} \end{tiny}$. Then $\La = \ZZ \al_1 + \ZZ (\tfrac{1}{2} \al_1 + \tfrac{1}{2} \al_2)$ is a formal Demazure lattice.
\sm

(v) If $\Phi$ is a finite root system, i.e., the generalized Cartan matrix is a Cartan matrix, any abelian group $\La$ satisfying $\La_r \subset \La \subset \La_w$ is free of finite rank and satisfies (FDL2). It is shown in \cite[Lemma~2.1]{CZZ} that any such $\La$ satisfies (FDL1) and hence is a formal Demazure lattice, except when $\La=\La_w$ and $\Phi$ is of type $\rmc_l$, $l \ge 1$. \sm

(vi) Let $\Phi$ be a Kac-Moody root system whose associated generalized Cartan matrix $A$ is invertible. Then $\La_w/\La_r$ is finite, namely $|\La_w/ \La_r| = | \det(A)|$. Hence there are only finitely many formal Demazure lattices in this case. 
\end{exs}

\begin{rems}
(1) In this remark, let $k=\CC$. Following \cite[Section~6.1.6]{Ku}, we define an \emph{integral Cartan subalgebra} $\lieh_{\ZZ}$ of $\lieg$ as a finitely generated $\ZZ$-submodule of $\lieh$ satisfying the following conditions:
\begin{description}
\item[(C1)] $\lieh_{\ZZ}$ is an integral form of $\lieh$, i.e., the natural map $\lieh_{\ZZ} \otimes_{\ZZ} \CC \to \lieh$ is an isomorphism,
\item[(C2)] all simple coroots $\al_i \ch \in \lieh_{\ZZ}$,
\item[(C3)] $\lieh^*_{\ZZ} := \Hom_{\ZZ}(\lieh_{\ZZ},\ZZ) \subset \lieh^*$ contains all simple roots $\al_i$, and
\item[(C4)] $\lieh_{\ZZ}/\sum_{i=1}^n \ZZ \al_i \ch$ is torsion free.
\end{description}
However, for an integral Cartan subalgebra $\lieh_{\ZZ}$, $\lieh^*_{\ZZ}$ is not necessarily a formal Demazure lattice. For example, if $\Phi=A^{(1)}_1$ and $\lieh_{\ZZ}= \ZZ d \oplus \ZZ \al_0 \ch \oplus \ZZ \al_1 \ch$, then since $\al_1(d)=0, \al_1(\al_0 \ch)=-2$, and $\al_1(\al_1 \ch)=2$ we have that $\tfrac{1}{2} \al_1 \in \lieh^*_{\ZZ}$, hence $\al_1$ cannot be extended to a basis of $\lieh^*_{\ZZ}$. This situation occurs only if the Cartan matrix of $\Phi$ has a column of even integers. \sm

\noindent
(2) Following \cite[Section~6.1]{MP}, we can define a \emph{restricted weight lattice} as a subgroup $\La$ of $\lieh^*$ satisfying the following properties:
\begin{description}
\item[(RWL1)] $\La \subset \La_w$,
\item[(RWL2)] $\langle \La, \al_i \ch \rangle \subset \ZZ, \forall i \in I$,
\item[(RWL3)] there exists a minimal regular weight $\rho \in \La$, i.e. $\langle \rho, \al_i\ch \rangle = 1, \forall i\in I$,
\item[(RWL4)] $\La$ is a free $\ZZ$-module with a basis $B$ consisting of $\kk$-linearly independent elements and such that $B$ contains a set of fundamental weights.
\end{description}
Again, we have that a restricted weight lattice is not necessarily a formal Demazure lattice. For example, if $\Phi=A^{(1)}_1$, $\La= \ZZ d^* \oplus \ZZ \tfrac{1}{2} \al_1 \oplus \ZZ \de$ is a restricted weight lattice because $\{\omega_0=d^*, \omega_1=d^* + \frac{1}{2} \al_1 \}$ is a set of fundamental weights. However, it is not a formal Demazure lattice since $\tfrac{1}{2}\al_1$ cannot be extended to a basis of $\La$. Also, any set of fundamental weights is of the form $\{\omega_0 + m\de, \omega_1+n \de\}$ for some $m,n\in \ZZ$. So we have that $\La = \ZZ d^* \oplus \ZZ \al_0 \oplus \frac{1}{2} \de$ is a formal Demazure lattice which is not a restricted weight lattice since $\omega_1 + n \de \notin \La$ for any $n \in \ZZ$.
\end{rems}

\section{Formal affine Demazure algebras}

The purpose of the present section is to extend the definitions of the formal Demazure operator of \cite{CPZ} and of the formal affine Demazure algebra of \cite{HMSZ} and \cite{CZZ} to formal Demazure lattices. We follow closely \cite[\S3]{CPZ} and \cite[\S2]{CZZ}.

In this section $\La$ is a formal Demazure lattice.

\begin{dfn}
For $\al \in \Lambda$, we will say that an element $x_{\al}$ of $\RLF$ is \emph{regular} if $x_{\al}$ is not a zero divisor of $\RLF$.
\end{dfn}

\begin{lem}[{cf.~\cite[Lem.~2.2]{CZZ}}]
For $\al \in \Phireal$, $x_{\al}$ is regular in $\RLF$. \label{regular}
\end{lem}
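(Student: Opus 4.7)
The plan is to reduce the claim to the obvious fact that any variable is a non-zero-divisor in a formal power series ring $R\llbracket y_1,\ldots,y_n\rrbracket$, by choosing a $\ZZ$-basis of $\Lambda$ in which $\al$ is the first basis element.

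First I would invoke Lemma~\ref{formal Demazure:real:roots}(iii): since $\al \in \Phireal$, it can be extended to a $\ZZ$-basis $\{\al, \la_2,\ldots,\la_n\}$ of $\La$, where $n=\rk(\La)$. Using the fact recalled at the end of the formal group algebra subsection of \S\ref{sec:prel} (namely, that if $\La$ is free abelian of rank $n$, then $\RLF$ is non-canonically isomorphic to $R\llbracket y_1,\ldots,y_n\rrbracket$ via the choice of a $\ZZ$-basis), I obtain an isomorphism of $R$-algebras
\[
\varphi \colon \RLF \;\xrightarrow{\;\cong\;}\; R\llbracket y_1,\ldots,y_n \rrbracket
\]
sending $x_\al \mapsto y_1$ and $x_{\la_i} \mapsto y_i$ for $i=2,\ldots,n$. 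Since $y_1$ is clearly regular in the formal power series ring $R\llbracket y_1,\ldots,y_n\rrbracket$ (any nonzero power series has a well-defined lowest $y_1$-degree contribution that cannot be killed by multiplication by $y_1$), the same is true for $x_\al$ in $\RLF$.

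The only non-trivial input is the unimodularity property of real roots, which was precisely the content of Lemma~\ref{formal Demazure:real:roots}(iii); this is what makes the extension of $\al$ to a basis of $\La$ possible and thus allows us to realize $x_\al$ as a ``power series variable''. Everything else is formal, so I do not expect any serious obstacle. This is the direct Kac-Moody analogue of the finite-type argument in \cite[Lem.~2.2]{CZZ}, the only change being that property (FDL1) (completion of simple roots to a basis) is replaced by its $W$-translate (Lemma~\ref{formal Demazure:real:roots}(iii)) to handle all real roots uniformly.
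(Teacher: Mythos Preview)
Your proposal is correct and is essentially identical to the paper's own proof: both use Lemma~\ref{formal Demazure:real:roots}(iii) to extend the real root $\al$ to a $\ZZ$-basis of $\La$, then invoke the isomorphism $\RLF \cong R\llbracket x_1,\ldots,x_n\rrbracket$ (with $x_\al \mapsto x_1$), and finally note that a variable in a formal power series ring is a non-zero-divisor. The only cosmetic difference is that the paper cites \cite[Cor.~2.12]{CPZ} and \cite[Lemma~12.3]{CPZ} for the last two steps, whereas you justify them directly.
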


\begin{proof}
Since $\Lambda$ is a formal Demazure lattice and $\al \in \Phireal$ is a real root, $\al$ can be completed to a basis of $\La$ by \ref{formal Demazure:real:roots}. Also, by \cite[Corollary 2.12]{CPZ} we have $\RLF \simeq R\llbracket x_1,\ldots ,x_n\rrbracket $ with $x_1 = x_{\al}$. Therefore, $x_{\al}$ is regular by \cite[Lemma~12.3]{CPZ}. 
\end{proof}

\begin{rem}
Let $\La=\lieh^*_{\ZZ}$ where $\lieh_{\ZZ}$ is any integral Cartan subalgebra of $\lieg$. Suppose the Cartan matrix of $\Phi$ has no column of even integers. Then by an argument similar to the proof of \cite[Lemma~2.2]{CZZ}, $x_{\al}$ is regular in $\RLF$ for any real root $\al \in \Phireal$. If the Cartan matrix of $\Phi$ has a column of even integers, $x_{\al}$ is regular if 2 is regular in $R$. The same situation occurs if $\La$ is a restricted weight lattice.
\end{rem}

Following \cite[Corollary~3.4]{CPZ}, we can show that $u-s_{\al}(u)$ is uniquely divisible by $x_{\al}$ for any $u \in \RLF$ if and only if $x_{\al}$ is regular. Since $x_{\al}$ is regular for any $\al \in \Phireal$, we can define an operator on $\RLF$ as follows.
 
\begin{dfn}[{\cite[Def.~3.5]{CPZ}}]
Let $\al \in \Phireal$. We define a $R$-linear operator on $\RLF$ as
\begin{equation*}
\Delta_{\al}(u) = \tfrac{u-s_{\al}(u)}{x_{\al}}
\end{equation*}
\noindent for all $u\in \RL$. We call the $\Delta_{\al}$'s \emph{Demazure operators}.
\end{dfn}

We now follow \cite[\S6]{HMSZ} and \cite[\S5]{CZZ}. Let $Q^F$ be the localization of $\RLF$ at the multiplicative subset generated by $\{ {x_{\al}} \mid \al \in \Phireal \}$. The localization map is injective (see \cite[Lemma~3.2]{CZZ}) because $x_{\al}$ is regular for all $\al \in \Phireal$ by \ref{regular}. Since $W$ preserves the set of real
roots, the action of $W$ on $\RLF$ extends to an action on $Q^F$.

\begin{dfn}
\cite[Def.~6.1]{HMSZ} Let $\de_w$ be the element in the group algebra $R[W]$ corresponding to $w\in W$ and $1 := \de_1 \in R[W]$. We define the \emph{twisted formal group algebra} to be the $R$-module $Q_W^F := R[W] \otimes_R Q^F$. We abbreviate $xq := x \otimes q$ for $x\in R[W], q\in Q^F$ and define an $R$-bilinear multiplication by
\begin{equation*}
(\de_{w'} \psi')(\de_w \psi) = \de_{w'w} w^{-1}(\psi') \psi, \forall w,w' \in W, \psi,\psi' \in Q^F.
\end{equation*}
\end{dfn}

The algebra $Q_W^F$ is unital associative and the $W$-invariant elements lie in the center.

\begin{dfn}
\cite[Def.~6.2]{HMSZ} For each $\al \in \Phireal$ we define the \emph{formal Demazure element} as
\begin{equation*}
X_{\al} := \tfrac{1}{x_{\al}}(1-\de_{s_{\al}}) = \tfrac{1}{x_{\al}} - \tfrac{\de_{s_{\al}}}{x_{-\al}} \in Q_W^F.
\end{equation*}
\end{dfn}

Let $\Delta_i= \Delta_{\al_i}$ and $X_{i} = X_{\al_i}$. For a reduced expression $w=s_{\al_{i_1}}s_{\al_{i_2}} \cdots s_{\al_{i_n}}$ in $W$, we denote \[X_w= X_{\al_{i_1}}X_{\al_{i_2}} \cdots X_{\al_{i_n}} = X_{i_1} X_{i_2} \cdots X_{i_n}.\] We will also sometimes denote $X_iX_jX_i\ldots $ as $X_{iji\ldots }$. 

One can check that the following lemma holds in our setting.

\begin{lem}
\emph{\cite[Cor.~5.6]{CZZ}} The elements $(X_w)_{w\in W}$ form a basis of $Q^F_W$ as a left $Q^F$-module.
\end{lem}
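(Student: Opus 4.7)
The plan is to produce an explicit change-of-basis between the known $Q^F$-basis $\{\de_w\}_{w\in W}$ of $Q^F_W$ and the family $\{X_w\}_{w\in W}$, and to show this change-of-basis is ``upper triangular with invertible diagonal entries'' with respect to a length refinement of the Bruhat order on $W$.

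First, I fix a reduced expression $w = s_{i_1}\cdots s_{i_n}$ and expand
\[
X_w \;=\; X_{i_1}\cdots X_{i_n} \;=\; \prod_{k=1}^n \tfrac{1}{x_{\al_{i_k}}}\bigl(\de_e - \de_{s_{i_k}}\bigr)
\]
into $2^n$ summands indexed by subsets $T\subseteq\{1,\dots,n\}$. Using the multiplication rule of $Q^F_W$, in particular the commutation $\psi\,\de_u = \de_u\, u^{-1}(\psi)$, each summand can be rewritten in standard form as $c_T\,\de_{v_T}$, where $v_T := \prod_{k\in T} s_{i_k}$ (taken in the increasing order of $k$) and $c_T \in Q^F$ is a sign times a product of inverses $x_\be^{-1}$ for certain real roots $\be$ (specifically roots of the form $s_{i_1}\cdots s_{i_{k-1}}(\al_{i_k})$, which are real since they are $W$-translates of simple roots). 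By Lemma~\ref{regular} each such $x_\be$ is a unit of $Q^F$, hence each $c_T$ is a unit of $Q^F$.

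Next, I invoke the subword property of the Bruhat order on the Coxeter group $W$: every $v_T$ satisfies $v_T\leq w$, and since $w = s_{i_1}\cdots s_{i_n}$ is reduced, a length comparison shows that the only subset $T$ producing $v_T = w$ is $T=\{1,\dots,n\}$. Grouping terms with the same $v_T$ therefore yields a finite sum
\[
X_w \;=\; a_{w,w}\,\de_w \;+\; \sum_{v < w} a_{w,v}\,\de_v,
\]
in which $a_{w,w} = c_{\{1,\dots,n\}}$ is a single unit of $Q^F$ (no cancellation is possible because only one $T$ contributes to the diagonal).

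Finally, I choose any total order on $W$ refining Bruhat order (for instance by length, then arbitrarily on each level). The matrix expressing $\{X_w\}$ in the basis $\{\de_w\}$ is then upper triangular with each diagonal entry a unit of $Q^F$, hence invertible. This delivers both spanning (each $\de_w$ is expressible as a finite $Q^F$-linear combination of the $X_v$'s with $v\leq w$, obtained by inductively inverting the triangular system on length) and linear independence (in any hypothetical finite relation $\sum c_w X_w = 0$, picking $w$ maximal in the support makes the coefficient of $\de_w$ equal to $c_w\,a_{w,w}$, forcing $c_w=0$ since $a_{w,w}$ is a unit). The main obstacle is the combinatorial bookkeeping in the expansion and confirming that exactly one subword $T$ gives $v_T = w$; both are routine once one invokes the standard subword/exchange properties of Coxeter groups.
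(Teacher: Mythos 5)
Your argument is correct and is essentially the standard one: the paper offers no proof of its own, simply citing \cite[Cor.~5.6]{CZZ}, where the statement is established by exactly this triangular change of basis from $(\de_w)_{w\in W}$ to $(X_w)_{w\in W}$ with respect to (a length refinement of) the Bruhat order, with unit diagonal entries coming from the subword property. One minor attribution point: the invertibility of $x_\be$ for a real root $\be$ holds in $Q^F$ by the very definition of the localization at the multiplicative set generated by $\{x_\al \mid \al\in\Phireal\}$, whereas the regularity lemma is what guarantees that $\RLF$ injects into $Q^F$; this does not affect the validity of your proof.
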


\begin{dfn}
\cite[Def.~6.3]{HMSZ} The \emph{formal Demazure algebra} $\DF$ is the $R$-subalgebra of $Q^F_W$ generated by the formal Demazure elements $X_{\al}$ for all $\al \in \Phireal$. The \emph{formal affine Demazure algebra} $\ADF$ is the $R$-subalgebra of $Q^F_W$ generated by the elements of the formal group algebra $\RLF$ and by $\DF$. In fact, by \cite[Lemma 5.8]{CZZ}, $\ADF$ is generated by $X_{\al_i}$ for all simple roots $\al_i$ together with the elements of $\RLF$.
\end{dfn}

\begin{dfn}
\cite[Def.~4.2]{HMSZ} Consider the power series $g^F(u,v)$ defined by $u+_Fv = u+v-uvg^F(u,v)$ and, for $\al \in \Phi^{re}$, let 
\[
\kappa_{\al} := g^F(x_{\al},x_{-\al}) = \tfrac{1}{x_{\al}} + \tfrac{1}{x_{-\al}}.
\]
We have $\kappa_{\al} \in \RLF$ since $g^F(x_{\al},x_{-\al}) \in \RLF$ by definition of a formal group law. For all $\la, \mu \in \La \setminus \{0\}$ with $\la+\mu \neq 0$, we define 
\[
\kappa_{\la,\mu} := \tfrac{1}{x_{\la+\mu}} \left( \tfrac{1}{x_{\mu}} - \tfrac{1}{x_{-\la}}\right) - \tfrac{1}{x_{\la}x_{\mu}} \in Q^F.
\] 
In fact, $\kappa_{\la,\mu} \in \RLF$ by \cite[Lemma 6.7]{HMSZ}. We denote $\kappa_{a\al_i+b\al_j, c\al_i+d\al_j}$ as $\kappa_{ai+bj,ci+dj}$ and $x_{a\al_i+b\al_j}$ as $x_{ai+bj}$.
\end{dfn}

\begin{prop}[{\cite[Prop.~6.8]{HMSZ}}] 
Suppose $i,j\in I$ and let $m_{ij}$ be the order of $s_is_j$ in $W$. Then $$ \underbrace{X_j X_i X_j \cdots}_{m_{ij} terms} - \underbrace{X_i X_j X_i \cdots}_{m_{ij} terms} = \sum_{w\in W, 1 \leq l(w) \leq m_{ij}-2} X_w \eta_w$$ for some $\eta_w \in Q^F$. More precisely, we have the following braid relations:
\begin{itemize}
\item[(a)] If $\langle \al_i,\al_j \ch \rangle = 0$, so that $m_{ij}=2$, then $X_i X_j = X_j X_i$.
\item[(b)] If $\langle \al_i,\al_j \ch \rangle = \langle \al_j,\al_i \ch \rangle =-1$, so that $m_{ij}=3$, then $$X_{jij} - X_{iji} =  X_i \kappa_{i,j} - X_j \kappa_{j,i}.$$
\item[(c)] If $\langle \al_i,\al_j \ch \rangle = -1$ and $\langle \al_j,\al_i \ch \rangle = -2$, so that $m_{ij}=4$, then
\begin{align*} 
X_{jiji} - X_{ijij} &= X_{ij}(\kappa_{i+2j,-j} + \kappa_{j,i}) - X_{ji}(\kappa_{i+j,j} + \kappa_{i,j}) \\ 
&\quad + X_j(\Delta_i(\kappa_{i+j,j} + \kappa_{i,j})) - X_i(\Delta_j(\kappa_{i+2j,-j} + \kappa_{j,i})). 
\end{align*}
\item[(d)] If $\langle \al_i,\al_j \ch \rangle = -1$ and $\langle \al_j,\al_i \ch \rangle =-3$, so that $m_{ij}=6$, then
\begin{align*}
X_{jijiji} - X_{ijijij} &= X_{ijij}(\kappa_{j,i} + \kappa_{2i+3j,-i-2j}+\kappa_{-i-3j,i+2j} + \kappa_{i+2j,-j} ) \\ 
 &\quad \quad - X_{jiji}(\kappa_{i,j} + \kappa_{-2i-3j,i+2j}+\kappa_{-i-2j,i+3j} + \kappa_{i+j,j}) \\ 
 &\quad + X_{jij}(\Delta_i(\kappa_{i,j} + \kappa_{-2i-3j,i+2j}+\kappa_{-i-2j,i+3j} + \kappa_{i+j,j})) \\
 &\quad \quad - X_{iji}(\Delta_j(\kappa_{j,i} + \kappa_{2i+3j,-i-2j}+\kappa_{-i-3j,i+2j} + \kappa_{i+2j,-j}))\\ 
 &\quad + X_{ij} \xi_{ij} - X_{ji} \xi_{ji} + X_j(\Delta_i(\xi_{ji})) - X_i(\Delta_j(\xi_{ij})),
\end{align*}

\noindent
where $\xi_{ij} = \frac{1}{x_ix_{i+j}x_{i+2j}x_{2i+3j}} + \frac{1}{x_ix_jx_{i+2j}x_{-2i-3j}} + \frac{1}{x_ix_jx_{2i+3j}x_{-i-j}} \\ - \frac{1}{x_ix_{i+j}x_{i+2j}x_{-1-3j}} - \frac{1}{x_ix_{i+j}x_{i+3j}x_{-j}} + \frac{1}{x_{i+j}x_{i+3j}x_{-j}x_{-2i-3j}} +\frac{1}{x_{i+3j}x_{2i+3j}x_{-j}x_{-i-2j}} \\ + \frac{1}{x_{i+j}x_{i+2j}x_{-i-3j}x_{-2i-3j}} - \frac{1}{x_ix_jx_{i+2j}x_{i+3j}}$\\

\noindent and $\xi_{ji} = \frac{1}{x_ix_jx_{2i+3j}x_{-i-2j}} +
\frac{1}{x_ix_jx_{i+2j}x_{-i-3j}} + \frac{1}{x_jx_{i+2j}x_{i+3j}x_{2i+3j}}
\\- \frac{1}{x_ix_jx_{i+j}x_{2i+3j}} +
\frac{1}{x_{i+j}x_{i+2j}x_{-i}x_{-2i-3j}} +
\frac{1}{x_{i+3j}x_{2i+3j}x_{-i-j}x_{-i-2j}} +
\frac{1}{x_{i+j}x_{i+3j}x_{-i}x_{-i-2j}} \\
-\frac{1}{x_jx_{i+3j}x_{2i+3j}x_{-i-j}} -
\frac{1}{x_jx_{i+j}x_{i+3j}x_{-i}}$.
\end{itemize}
\label{propbraidrel}
\end{prop}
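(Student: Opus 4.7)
The plan is to adapt the proof of \cite[Prop.~6.8]{HMSZ} to the Kac-Moody setting by observing that the entire computation is confined to a rank-2 subsystem. Since $m_{ij}\in\{2,3,4,6\}$ is finite, the subgroup $\lan s_i, s_j\ran \subset W$ is a dihedral group of order $2m_{ij}$, and its orbit of $\{\al_i, \al_j\}$ is a finite set of real roots $\Phi_{ij} \subset \Phireal$ forming a sub-root-system of type $A_1\times A_1$, $A_2$, $B_2$ or $G_2$ respectively. In particular, every root of the form $a\al_i+b\al_j$ appearing in the asserted formulas (e.g.\ $\al_i + 2\al_j$ in case (c) or $2\al_i + 3\al_j$ in case (d)) lies in $\Phireal$, so by Lemma~\ref{regular} the corresponding $x_\al$ is regular in $\RLF$ and the elements $\kappa_{\la,\mu}$, $X_\al$, $\De_\al$ are well-defined in $Q_W^F$. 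Also $a\al_i + b\al_j \ne 0$ whenever $(a,b)\ne(0,0)$, since $\al_i,\al_j$ are $\ZZ$-linearly independent in $\La_r$.

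With this in hand I would mimic the HMSZ argument step by step: expand both $X_j X_i X_j\cdots$ and $X_i X_j X_i\cdots$ (each with $m_{ij}$ factors) using $X_\al = \tfrac{1}{x_\al}(1 - \de_{s_\al})$, push each group element $\de_{s_\al}$ to the right via the twisted multiplication rule, and collect the result as a $Q^F$-linear combination of the basis $(X_w)_{w\in W}$ provided by \cite[Cor.~5.6]{CZZ}. The difference involves only terms with $1\le\ell(w)\le m_{ij}-2$, since the leading length-$m_{ij}$ term appears with opposite signs in the two reduced expressions for the longest element of $\lan s_i,s_j\ran$ and cancels. The resulting coefficient identities are purely formal consequences of the formal group law axioms and of the $W$-action $s_\al(\la)=\la-\lan\la,\al\ch\ran\al$, which preserves $\La_r$ by (FDL1)--(FDL2); none of these manipulations are sensitive to whether the ambient $\Phi$ is finite or of general Kac-Moody type.

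The main obstacle is therefore bookkeeping rather than novelty: cases (c) with $m_{ij}=4$ and especially (d) with $m_{ij}=6$ involve long expressions with many $\kappa_{\la,\mu}$ terms. These identities are identical to those verified in \cite{HMSZ} for the finite root systems $B_2$ and $G_2$ respectively, so rather than reproving them I would invoke those computations, noting that they take place inside the $R$-subalgebra of $Q_W^F$ generated by $\{x_\la \mid \la \in \ZZ\al_i + \ZZ\al_j\}$ together with $\de_{s_i}, \de_{s_j}$; this subalgebra is literally the same object in the Kac-Moody and finite cases, because the formal group algebra construction and the twisted multiplication both depend only on $\La_r \cap (\ZZ\al_i + \ZZ\al_j)$ and on the $\lan s_i,s_j\ran$-action thereon. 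Case (a) is trivial since $\lan\al_i,\al_j\ch\ran=0$ implies $s_i(\al_j)=\al_j$ and $s_j(\al_i)=\al_i$, so $X_i$ and $X_j$ commute; case (b) reduces to the HMSZ calculation inside $A_2$.
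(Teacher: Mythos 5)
Your proposal is correct and takes essentially the same route as the paper, which gives no independent proof but cites \cite[Prop.~6.8]{HMSZ} and, in Remark~\ref{remarkcoeff}, records precisely your reduction to the rank-2 subsystem $\Span_{\ZZ}\{\al_i,\al_j\}$. Your explicit justification --- that every root $a\al_i+b\al_j$ occurring in the formulas is real, hence $x_{a\al_i+b\al_j}$ is regular by Lemma~\ref{regular} and the HMSZ computation transfers verbatim to any formal Demazure lattice --- is exactly the point the paper leaves implicit.
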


\begin{rem}
In the cases $(a),(b)$ and $(c)$, we have $\eta_w \in \RLF$ since $\kappa_{\la,\mu} \in \RLF$ for all $\la,\mu \in \Phi,\la+\mu \neq 0$ by \cite[Lemma~6.7]{HMSZ}. For case $(d)$, we can view $\Span_{\ZZ}\{\al_i,\al_j\}$ as a finite subroot system of $\Phi$ of rank 2 and we also have $\eta_w \in \RLF$ by
\cite[Lemma~7.1]{CZZ} and \cite[Example~7.3]{CZZ}. \label{remarkcoeff}
\end{rem}

\begin{exs} 
For $F(u,v)=F_a(u,v)$ or $F(u,v)=F_m(u,v)$, since $\kappa_{i,j}= \tfrac{x_i(x_{-i}-x_j)-x_{i+j}x_{-i}}{x_ix_{-i}x_jx_{i+j}}$ we have that $\kappa_{i,j} = 0$ hence the braid relations $X_{ji\ldots } = X_{ij\ldots }$ hold. For $F(u,v)=F_{\mu_1,\mu_2}(u,v)$, we get by direct computation that
\begin{itemize}
\item[(i)] $\kappa_{i,j} = \mu_2$ (see also \cite[Rem.~3.10 and Ex.~3.12.(3)]{Co}), so for $m_{ij}=3$ we have \[X_{jij} - X_{iji} =
    \mu_2(X_i - X_j),\] and for $m_{ij}=4$ we have \[X_{jiji} - X_{ijij} =
    2\mu_2(X_{ij} - X_{ji}),\]
\item[(ii)] $\xi_{ji} = \xi_{ij} = 3\mu_2^2$ (see computer aided computations in \cite[Appendix]{Le}), so for $m_{ij}=6$ we have
    \[X_{jijiji}-X_{ijijij} = 4\mu_2(X_{ijij} - X_{jiji}) + 3\mu_2^2(X_{ij}
    - X_{ji}).\]
\item[(iii)] $\kappa_i = \mu_1$, so $X_i^2=\mu_1 X_i$ for all $i\in I$.
\end{itemize} \label{specialelliptickappas}
\end{exs}

\begin{prop}[{cf.~\cite[Thm.~6.14]{HMSZ}}] 
Let $\La$ be a formal Demazure lattice. Let $F(u,v)=F_{\mu_1,\mu_2}(u,v)$ be the hyperbolic 2-parameter formal group law.

The formal affine Demazure algebra \emph{$\ADF$} is generated as an $R$-algebra by elements of $\RLF$ and the formal Demazure elements $X_i$, $i\in I$, and satisfies the following relations:
\begin{itemize}
\item $\gamma X_{\al} = X_{\al} s_{\al}(\gamma) + \Delta_{\al}(\gamma)$ for all
    $\al \in \Phireal$ and $\gamma \in \RLF$;
\item $X^2_i = \mu_1 X_i$ for all $i \in I$;
\item $X_iX_j = X_jX_i$ for all $i,j \in I$ if $m_{ij}=2$;
\item the braid relations (i) and (ii) of \ref{specialelliptickappas} for
    all $i,j \in I$ if $m_{ij} =3,4,6$;
\item no relations between $X_i$ and $X_j$ for all $i,j \in I$ such that
    $m_{ij}=\infty$.
\end{itemize}
These relations form a complete set of relations for \emph{$\ADF$}.
\label{relationsDF}
\end{prop}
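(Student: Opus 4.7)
The plan is to verify the listed relations directly from the preceding material, and then to prove that they form a complete set by presenting an abstract algebra from these relations and identifying it with $\ADF$.

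For the verification step, the generation statement is exactly the result cited in the definition of $\ADF$ (\cite[Lemma~5.8]{CZZ}). The straightening relation $\gamma X_\al = X_\al s_\al(\gamma) + \Delta_\al(\gamma)$ is a short computation in $Q^F_W$: using $X_\al = \tfrac{1}{x_\al}(1-\de_{s_\al})$ and the twisted multiplication rule, one finds $\gamma \de_{s_\al} = \de_{s_\al} s_\al(\gamma)$, and then the definition of $\Delta_\al$ gives
\[
\gamma X_\al - X_\al s_\al(\gamma) = \tfrac{\gamma - s_\al(\gamma)}{x_\al} = \Delta_\al(\gamma).
\]
The quadratic relation $X_i^2 = \mu_1 X_i$ is Example~\ref{specialelliptickappas}(iii), the commutation for $m_{ij}=2$ is Proposition~\ref{propbraidrel}(a), and the hyperbolic braid relations for $m_{ij} \in \{3,4,6\}$ are obtained by substituting the values $\kappa_{i,j} = \mu_2$ and $\xi_{ij} = \xi_{ji} = 3\mu_2^2$ from Example~\ref{specialelliptickappas}(i),(ii) into the general formulas of Proposition~\ref{propbraidrel}(b)--(d). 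When $m_{ij} = \infty$ no braid relation is imposed.

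For completeness, let $H$ denote the $R$-algebra presented by generators $\RLF \cup \{\widetilde X_i \mid i \in I\}$ modulo the relations of the proposition. The universal property furnishes a surjective $R$-algebra morphism $\pi \colon H \twoheadrightarrow \ADF$ sending $\widetilde X_i \mapsto X_i$, and the task is to show $\pi$ is injective. Using the straightening relation we move every $\RLF$-coefficient past the Demazure generators, so that every element of $H$ is a finite $\RLF$-linear combination of monomials in the $\widetilde X_i$. The quadratic relation shortens any word containing consecutive repetitions $\widetilde X_i \widetilde X_i$, reducing us to words corresponding to reduced expressions in $W$. The braid relations then convert one reduced expression for $w\in W$ into any other, producing only correction terms supported on strictly shorter elements of $W$. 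A length induction based on Matsumoto's theorem (applicable since $W$ is a Coxeter group) then shows that $H$ is spanned as a left $\RLF$-module by symbols $\widetilde X_w$, $w \in W$, after fixing one reduced expression per $w$.

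The main obstacle will be this length-induction step: one must verify that every correction term appearing on the right-hand sides of Proposition~\ref{propbraidrel}(b)--(d) in the hyperbolic case really lies in the $\RLF$-span of $X_{w'}$ with $\ell(w') < \ell(w)$, so that the induction does not stall. This is visible from the explicit formulas of Example~\ref{specialelliptickappas}, and the coefficients lie in $\RLF$ (not merely $Q^F$) by Remark~\ref{remarkcoeff}. Once spanning is established, injectivity of $\pi$ is immediate from the basis property of $\{X_w\}_{w\in W}$ in $Q^F_W$ over $Q^F$ (the lemma cited just before Definition of $\DF$): any relation $\sum_w \gamma_w \widetilde X_w = 0$ in $H$ maps under $\pi$ to $\sum_w \gamma_w X_w = 0$ in $Q^F_W$, forcing each $\gamma_w = 0$. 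Hence $\pi$ is an isomorphism and the listed relations are a complete set.
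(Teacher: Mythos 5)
Your proposal is correct and follows essentially the same route as the paper: the paper's proof simply invokes the argument of \cite[Thm.~6.14]{HMSZ} together with the observation that $\xi_{ij},\xi_{ji}\in\RLF$ (Remark~\ref{remarkcoeff} or Example~\ref{specialelliptickappas}(ii)), and your write-up spells out exactly that argument, correctly isolating the same key point that the correction terms in the braid relations have coefficients in $\RLF$ so the length induction against the $Q^F$-basis $(X_w)_{w\in W}$ goes through.
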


\begin{proof}
This follows by the same proof as in \cite[Thm.~6.14]{HMSZ} and by the fact that both $\xi_{ji}$ and $\xi_{ij}$ lie in $\RLF$ by \ref{remarkcoeff} or in view of the explicit formulas~\ref{specialelliptickappas}.(ii).
\end{proof}

\begin{rem}
One can show that if we remove the first relation of \ref{relationsDF} we obtain a complete set of relations for the formal Demazure algebra $\DF$.
\end{rem}

\section{Hecke Algebras}

The purpose of the present section is to generalize \cite[Prop.~9.2]{CZZ1} to a Kac-Moody root system of arbitrary type, see \ref{thm:main}. In \ref{cor1}, we obtain an algebra isomorphic to the affine Demazure algebra.

\begin{dfn}
(cf.~\cite[~Def.~8.1]{HMSZ}) Let $R$ be a (commutative unital) ring containing $\ZZ[t,t^{-1}]$. In Lusztig's notation, the \emph{Hecke algebra} $H$ associated with the Coxeter group $W$ is the associative $R$-algebra with $1$ generated by elements $T_i:= T_{s_i}$, $i \in I$, and satisfying

\begin{itemize}
\item [(i)] the quadratic relations $(T_i+t)(T_i-t^{-1}) = T_i^2 +(t-t^{-1})T_i-1 = 0$ for all $i\in I$, and
\item [(ii)] the braid relations $T_iT_jT_i \cdots = T_jT_iT_j \cdots$
    ($m_{ij}$ factors on both sides of the equation) for any $i \neq j$ in $I$ with
    $s_{\al_i}s_{\al_j}$ of order $m_{ij}$ in $W$. If $m_{ij} = \infty$,
    there are no relations between $T_i$ and $T_j$.
\end{itemize}
\label{hecke}
\end{dfn}

\begin{thm}\label{thm:main}
Let $R$ be a (commutative unital) ring containing $\ZZ[t,t^{-1}]$ and let $u\in R$. Set $\mu_1 = u(t+t^{-1})$ and $\mu_2= -u^2$. Then if $F$ is the hyperbolic formal group law $F_{\mu_1,\mu_2}(u,v)=\frac{u+v-\mu_1 uv}{1+\mu_2 uv}$, the assignment $X_i \mapsto u(T_i+t)$ defines a morphism of $R$-algebras from the formal Demazure algebra $\DF$ to the Hecke algebra $H$ over $R$. If $u\in R^{\times}$, this morphism is an isomorphism.
\label{thm}
\end{thm}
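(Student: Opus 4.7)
The plan is to verify that the elements $Y_i := u(T_i+t) \in H$ satisfy the complete set of defining relations of $D_F$ given in \ref{relationsDF}; since by the remark following \ref{relationsDF} and the formulas in \ref{specialelliptickappas} these are exactly $X_i^2 = \mu_1 X_i$ together with the specific braid identities of \ref{specialelliptickappas}(i)--(ii) (and the commutation relation when $m_{ij}=2$), this will produce the desired $R$-algebra homomorphism $\varphi\colon D_F \to H$. For the quadratic relation I would use the Hecke identity $T_i^2 = 1 + (t^{-1}-t)T_i$ to compute $(T_i+t)^2 = 1 + t^2 + (t+t^{-1})T_i = (t+t^{-1})(T_i+t)$; multiplying by $u^2$ then gives $Y_i^2 = u(t+t^{-1}) Y_i = \mu_1 Y_i$.

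For $m_{ij}=2$ the commutation $Y_i Y_j = Y_j Y_i$ is immediate from $T_iT_j = T_jT_i$. For $m_{ij}\in\{3,4,6\}$, I would expand the two alternating products of $(T_i+t)$ and $(T_j+t)$ of length $m_{ij}$, cancel the $m_{ij}$-fold term via the Hecke braid relation, and collapse the surviving terms using $T_k^2 = 1 + (t^{-1}-t) T_k$; with $\mu_2 = -u^2$, the result should match the right hand sides predicted by \ref{specialelliptickappas}. As a sanity check, for $m_{ij}=3$ the difference collapses to $(T_j^2 - T_i^2) \cdot t + (T_j - T_i) t^2 = (T_j - T_i)(1 - t^2 + t^2) = T_j - T_i = u^{-1}(Y_j - Y_i)$, and multiplying by $u^3$ yields $Y_j Y_i Y_j - Y_i Y_j Y_i = u^2(Y_j - Y_i) = \mu_2(Y_i - Y_j)$ as required. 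The cases $m_{ij}=4,6$ follow the same template, just with longer bookkeeping.

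For the isomorphism claim when $u \in R^\times$, I would define $\psi \colon H \to D_F$ by $T_i \mapsto u^{-1} X_i - t$ and check it is well defined. Substituting into the Hecke quadratic relation and collecting gives
\[
(u^{-1} X_i - t)^2 + (t-t^{-1})(u^{-1} X_i - t) - 1 = u^{-2} X_i^2 - u^{-1}(t+t^{-1}) X_i,
\]
which vanishes by $X_i^2 = \mu_1 X_i = u(t+t^{-1}) X_i$. The Hecke braid relations pull back through $\psi$ to identities that are equivalent, by reversing the computations of the previous paragraph, to the braid relations in \ref{specialelliptickappas}, so they are satisfied. Finally $\varphi$ and $\psi$ are mutually inverse on generators ($u \cdot (u^{-1} X_i - t) + u t = X_i$ and $u^{-1} \cdot u(T_i + t) - t = T_i$), hence on the whole algebras.

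The hardest part will be the coefficient bookkeeping in the braid verification for $m_{ij} = 4$ and $m_{ij}=6$: the raw expansions yield $2^{m_{ij}}$ terms on each side, and while each reduces to a length $\le m_{ij}$ expression via the quadratic relation, matching the predicted coefficients $2\mu_2$, and $4\mu_2$, $3\mu_2^2$ respectively requires care. A natural shortcut is to observe that each braid relation is an identity purely in the rank-two parabolic subalgebra generated by $X_i, X_j$ (respectively $T_i, T_j$), so it suffices to check the isomorphism in the four rank-two cases $A_1 \times A_1$, $A_2$, $B_2$, $G_2$; the general Kac--Moody case then follows generator-by-generator from the presentations.
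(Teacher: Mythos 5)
Your proposal is correct and follows essentially the same route as the paper: map the free algebra on the $X_i$ to $H$ via $X_i\mapsto u(T_i+t)$, verify the quadratic and braid relations of \ref{relationsDF} using $T_i^2=1+(t^{-1}-t)T_i$ and the Hecke braid relations, and for $u\in R^\times$ construct the inverse $T_i\mapsto u^{-1}X_i-t$ and check it kills the Hecke relations; your explicit computations for the quadratic relation and the $m_{ij}=3$ case match the paper's. The only difference is presentational: the paper carries out the $m_{ij}=3$ verification for the inverse map explicitly rather than appealing to ``reversing the computation,'' but the content is the same.
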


\begin{proof}
Let $A$ be the free associative unital algebra on generators $X_i, i\in I$. There exists a unique (well-defined) algebra homomorphism $\psi:A \to H$ such that $\psi(X_i) = u(T_i + t)$. We check that $\psi$ annihilates the ideal generated by the relations of \ref{relationsDF} defining $\DF$. We have 
\begin{align*}
\psi(X_i^2- \mu_1 X_i) &= \psi(X_i)^2 - \mu_1 \psi(X_i) \\
&= u^2 T_i^2 + 2u^2tT_i + u^2t^2 -\mu_1 uT_i -\mu_1 u t \\
&= u^2((t^{-1}-t)T_i +1) + (2u^2 t-u^2(t+t^{-1})) T_i + u^2t^2-u^2t(t+t^{-1}) \text{ (by \ref{hecke}) } \\ 
&= (u^2t^{-1} - u^2 t+2u^2 t - u^2 t - u^2t^{-1})T_i + u^2 + u^2t^2-u^2t^2 - u^2 = 0
\end{align*}

We can check that $\psi(X_iX_j-X_jX_i) = 0$ since $T_iT_j=T_jT_i$ by \ref{hecke}. We also have 

\begin{align*}
\psi(X_{jij}-X_{iji} + \mu_2 X_j - \mu_2 X_i) &\stackrel{\eqref{hecke}}= u^3t(T_j^2-T_i^2) + u^2t^2(T_j-T_i) \\ &\quad + \mu_2 u(T_j+t)  - \mu_2u(T_i+t) \\
&= u^3t( (t^{-1}-t)T_j +1 - (t^{-1}-t)T_i - 1) \\ &\quad + u^2t^2(T_j-T_i)  + \mu_2u(T_j-T_i) + \mu_2ut-\mu_2ut \\
&= (u^3t(t^{-1}-t)+u^2t^2+\mu_2u)(T_j-T_i)\\ & =(u^3-u^3t^2+u^3t^2-u^3)(T_j-T_i)=0.
\end{align*}

Similarly, we get that $\psi(X_{jiji} - X_{ijij} + 2\mu_2(X_{ji} - X_{ij})) = 0$ and \[
\psi(X_{jijiji}-X_{ijijij} - 4\mu_2(X_{ijij} - X_{jiji}) - 3\mu_2^2(X_{ij} - X_{ji}))=0.\]

Since $\psi$ annihilates the relations defining $\DF$, it descends to a unital algebra homomorphism $\bar{\psi}\colon \DF \to H$ mapping $X_i\in \DF$ onto $\psi(X_i)\in H$. 

Moreover, if $u$ is invertible, $\bar{\psi}$ is surjective since the image of $\bar{\psi}$ contains the generators of $H$, i.e. $\bar{\psi}(u^{-1} X_i -t) = T_i \in \im(\bar{\psi})$. Also, we can find a surjective homomorphism in the other direction. Let $B$ be the free associative unital algebra on generators $T_i, i\in I$. There exists a unique (well-defined) algebra homomorphism $\phi\colon B \to \DF$ such that $\phi(T_i) = u^{-1} X_i -t$. We check that $\phi$ annihilates the ideal generated by the relations (i) and (ii) of \ref{hecke} defining~$H$. We have
\begin{align*}
\phi(T_i)^2 = ( u^{-1} X_i -t)^2 &= (u^{-2}X_i^2 - 2u^{-1}tX_i + t^2 \\ &= u^{-1}(t+t^{-1})X_i-2u^{-1}tX_i + t^2 \text{ (by \ref{specialelliptickappas})} \\ &= u^{-1}t^{-1}X_i - u^{-1}tX_i -1 + t^2 +1 \\ &= (t^{-1}-t)(u^{-1}X_i -t)+1 \\ &= (t^{-1}-t) \phi(T_i) +1.
\end{align*}
Therefore, $(\phi(T_i)+t)(\phi(T_i)-t^{-1}) = \phi(T_i)^2 +(t-t^{-1})\phi(T_i)-1 = 0$. We can also check that $\phi(T_iT_j-T_jT_i) = 0$ since $X_iX_j=X_jX_i$ by \ref{relationsDF}. We also have
\begin{align*}
\phi(T_{iji} - T_{jij}) &= u^{-3}(X_{iji}-X_{jij})- u^{-2}t(X_i^2-X_j^2) + u^{-1}t^2(X_i-X_j) \\ &= u^{-3}\mu_2(X_j-X_i) - u^{-2}t\mu_1(X_i-X_j) + u^{-1}t^2(X_i-X_j) \text{ (by \ref{specialelliptickappas})} \\ &= (-u^{-3}\mu_2 - u^{-2}t\mu_1 + u^{-1}t^2)(X_i-X_j) \\ &= (-u^{-3}u^2 - u^{-2}tu(t+t^{-1}) + u^{-1}t^2)(X_i-X_j) \\&= (-u^{-1} - u_{-1}t^2 - u^{-1} + u^{-1}t^2)(X_i-X_j) =0.
\end{align*}

Similarly, we get $\phi(T_{jiji} - T_{ijij})=0$ and $\phi(T_{jijiji} - T_{ijijij})=0$. Therefore, $\phi$ descends to a unital algebra homomorphism $\bar{\phi}\colon H \to \DF$, which is surjective and such that $\bar{\psi} \circ \bar{\phi}= \id_H$ and $\bar{\phi} \circ \bar{\psi}= \id_{\DF}$, hence, the isomorphism between $\DF$ and $H$. 
\end{proof}

As a consequence we obtain a similar result for formal affine Demazure
algebras.

\begin{cor}
Let $R$ be a (commutative unital) ring containing $\ZZ[t,t^{-1}]$, let $\Lambda$ be a formal Demazure lattice, and let $\Phi$ be a Kac-Moody root system. Let $\mu_1=t+t^{-1}$ and $\mu_2=-1$ and let $F_{\mu_1,\mu_2}(u,v)=\frac{u+v- (t+t^{-1})uv}{1-uv}$ be the hyperbolic formal group law. Let $\mathbf{H}$ be the $R$-algebra generated by the elements of $\RLF$ and by $T_i$, $i\in I$, subject to the relations (i) and (ii) of \ref{hecke}
and for all $\gamma \in \RLF$ and $i\in I$ \[\gamma T_i - T_i s_{\al_i}(\gamma)=(1-tx_{\al_i}) \Delta_{\al_i}(\gamma).\]
Then, the formal affine Demazure algebra $\ADF$ is isomorphic to $\mathbf{H}$ by a ring isomorphism preserving $\RLF$, in particular as an $R$-algebra. \label{cor1}
\end{cor}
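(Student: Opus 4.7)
The plan is to build mutually inverse $R$-algebra maps $\Phi\colon \ADF \to \mathbf{H}$ and $\Psi\colon \mathbf{H}\to \ADF$, both restricting to the identity on $\RLF$. Concretely, I would set $\Phi(X_i) = T_i + t$ and $\Psi(T_i) = X_i - t$, which corresponds to the specialization $u=1$ (hence $\mu_1 = t+t^{-1}$, $\mu_2 = -1$) of the isomorphism $\bar\psi\colon D_F\to H$ of Theorem~\ref{thm:main}. Since $\ADF$ is generated by $\RLF$ together with the simple $X_i$, and $\mathbf{H}$ is generated by $\RLF$ together with the $T_i$, it suffices to check that the defining relations of each algebra are sent to valid relations in the other.

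The braid and quadratic relations among the $X_i$ in $\ADF$ listed in Proposition~\ref{relationsDF} translate into valid identities among the $T_i$ by Theorem~\ref{thm:main} applied with $u=1$; conversely, the Hecke relations among the $T_i$ of Definition~\ref{hecke} translate into identities in $\DF$ by the computations $\phi(T_iT_jT_i\cdots-T_jT_iT_j\cdots)=0$ from the proof of Theorem~\ref{thm:main}. The only new ingredient is the commutation relation between $\RLF$ and the generators, which I would verify by the following short calculation. Using the identity $\gamma - s_{\al_i}(\gamma) = x_{\al_i}\Delta_{\al_i}(\gamma)$ (which is simply the definition of the Demazure operator), substitute $X_i = T_i + t$ into $\gamma X_i = X_i s_{\al_i}(\gamma) + \Delta_{\al_i}(\gamma)$ to obtain
\[
\gamma T_i - T_i s_{\al_i}(\gamma) = \Delta_{\al_i}(\gamma) - t\bigl(\gamma - s_{\al_i}(\gamma)\bigr) = (1 - t\,x_{\al_i})\Delta_{\al_i}(\gamma),
\]
which is exactly the defining relation in $\mathbf{H}$. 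Reading this computation in reverse shows that $\Psi$ sends the twisted commutation in $\mathbf{H}$ to the commutation relation in $\ADF$.

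Having checked that both $\Phi$ and $\Psi$ are well-defined $R$-algebra homomorphisms preserving $\RLF$, one observes that $\Phi\circ\Psi$ and $\Psi\circ\Phi$ act as the identity on each generating set (elements of $\RLF$ and the $T_i$, resp.\ $X_i$), hence on the whole algebras. The only potential obstacle is verifying that the presentation defining $\mathbf{H}$ is indeed complete enough for $\Psi$ to descend from the free algebra; this however is built into the statement of the corollary, and the analogous completeness for $\ADF$ is exactly Proposition~\ref{relationsDF}. All substantive work has already been done in Theorem~\ref{thm:main}; the present corollary amounts to the single algebraic identity above.
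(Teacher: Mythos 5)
Your proposal is correct and follows essentially the same route as the paper: specialize Theorem~\ref{thm:main} to $u=1$, define $X_i\mapsto T_i+t$ (with inverse $T_i\mapsto X_i-t$) extended by the identity on $\RLF$, and verify the one new relation via the computation $\gamma T_i - T_i s_{\al_i}(\gamma) = \Delta_{\al_i}(\gamma) - t(\gamma-s_{\al_i}(\gamma)) = (1-tx_{\al_i})\Delta_{\al_i}(\gamma)$, which is exactly the calculation in the paper's proof.
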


\begin{proof} We proceed in the same way as in \ref{thm:main} with $u = 1$. We have a unital algebra homomorphism $\psi$ defined as the identity on $\RLF$ and mapping $X_i \mapsto T_i + t$. To show that $\psi$ annihilates the ideal generated by the relations \ref{relationsDF} defining $\ADF$ it remains to show that \[\psi(\gamma X_i - X_i s_{\al_i}(\gamma) - \Delta_{\al_i}(\gamma)) = 0\] for all $i \in I$ and all $\gamma \in \RLF$. We have 
\begin{align*} \psi( \gamma X_i - X_i s_{\al_i}(\gamma) - \Delta_{\al_i}(\gamma)) &= \gamma(T_i+t)-(T_i+t)s_{\al_i}(\gamma) - \Delta_{\al_i}(\gamma) \\&= \gamma T_i - T_i s_{\al_i}(\gamma) + t(\gamma-s_{\al_i}(\gamma)) - \Delta_{\al_i}(\gamma) \\ &= \gamma T_i - T_i s_{\al_i}(\gamma)-(1-tx_{\al_i})\Delta_{\al_i}(\gamma) =0.
\end{align*}
Therefore, we get a unital algebra homomorphism $\bar{\psi}\colon \ADF \to \mathbf{H}$ and we can show it is an isomorphism as in the proof of \ref{thm:main}. \qedhere
\end{proof}

\bibliographystyle{plain}

\begin{thebibliography}{HMSZ}


\bibitem[BGG]{BGG} Bernstein, I. N.; Gelfand, I. M.; Gelfand, S. I.
    \emph{Schubert cells, and the cohomology of the spaces $G/P$}. (Russian)
    Uspehi Mat. Nauk 28 (1973), no. 3(171).

\bibitem[Bo]{Bo} Bourbaki, N., \emph{Alg\`ebre}, Ch.~IV-VII, Paris, Masson
    (1981).

\bibitem[BE]{BE} Bressler, P., Evens, S.   \emph{Schubert calculus in complex
    cobordisms}. Trans. Amer. Math. Soc. 331 (1992), no.2, 799--813.

\bibitem[BB]{BB} Buchstaber, V. M., Bunkova, E. YU., \emph{Elliptic Formal
    Groups Laws, Integral Hirzebruch Genera and Krichever Genera},
    arXiv:1010.0944v1.

\bibitem[BB1]{BB1} Buchstaber, V. M., Bunkova, E. YU., \emph{Krichever formal
    groups}, Funct. Analysis and Its Appl. (2011).

\bibitem[CPZ]{CPZ} Calm\`es, B., Petrov, V., Zainoulline, K.,
    \emph{Invariants, Torsion Indices and Oriented Cohomology of Complete
    Flags}, Ann. Sci. \'{E}c. Norm. Sup\'{e}r. (4) 46 (2013).

\bibitem[CZZ]{CZZ} Calm\`es, B., Zainoulline, K., Zhong, C. \emph{A
    Coproduct
    Structure on the Formal Affine Demazure Algebra}, to appear in Math.
    Zeitschrift, (arXiv:1209.1676v2).

\bibitem[CZZ1]{CZZ1} Calm\`es, B., Zainoulline, K., Zhong, C. \emph{Push-pull
    operators on the formal affine Demazure algebra and its dual},
    arXiv:1312.0019.

\bibitem[CZZ2]{CZZ2} Calm\`es, B., Zainoulline, K., Zhong, C.
    \emph{Equivariant oriented cohomology of flag varieties},
   Documenta Mathematica, Extra Volume: Alexander S. Merkurjev's Sixtieth Birthday, 113-144, (2015).

\bibitem[Co]{Co} Cooper, B. \emph{Deformations of nil Hecke algebras}, Acta Math. Vietnam. 39, no. 4, 515–527 (2014).
    arXiv:1208.5260v2.
    
\bibitem[Dem]{Dem} Demazure, M., \emph{Invariants sym\'{e}triques entiers des
    groupes de Weyl et torsion}, Invent. Math. 21, 287--301, (1973).

\bibitem[GR]{GR} Ganter, N., Ram, A., \emph{Generalized Schubert calculus.}
    (English summary) J. Ramanujan Math. Soc. 28A (2013).

\bibitem[HHH]{HHH} M. Harada, A. Henriques, T. Holm, \emph{Computation of
    generalized equivariant cohomologies of Kac-Moody flag varieties}, Adv.
    Math. 197 (2005).

\bibitem[Haz]{Haz} Hazewinkel, M., \emph{Formal Groups and Applications},
    Pure and Applied Mathematics, 78. Academic Press, Inc. [Harcourt Brace
    Jovanovich, Publishers], New York-London, (1978).

\bibitem[Hirz]{Hirz} Hirzebruch, F., \emph{Topological methods in algebraic
    geometry}, Translated from German, Classics in Mathematics, Springer-Verlag, Berlin, (1995).

\bibitem[HMSZ]{HMSZ} Hoffnung, A., Malag\'{o}n-L\'{o}pez, J., Savage, A.,
    Zainoulline, K. \emph{Formal Hecke Algebras and algebraic Oriented
    Cohomology Theories}, Selecta Math. 20 (2014), no.4, 1213--1245.

\bibitem[Kac]{K} Kac, V., \emph{Infinite-dimensional Lie algebras}, Third
    edition, Cambridge University Press, Cambridge, (1990).

\bibitem[KK]{KK} Kostant, B., Kumar, S., \emph{The nil Hecke ring and
    cohomology of G/P for a Kac-Moody group G}, Adv. in Math. 62, no. 3,
    187--237, (1986).
    
\bibitem[KK1]{KK1} Kostant, B., Kumar, S., \emph{T-equivariant K-theory of generalized flag  varieties}, J. Differential Geom. 32, no. 2, 549–603, (1990).    
    
\bibitem[Ku]{Ku} Kumar, S., \emph{Kac-Moody groups, their Flag Varieties and
    Representation Theory}, Progress in Mathematics, 204. Birkh\"auser
    Boston, Inc., Boston, MA, (2002).
    
\bibitem[Le]{Le} Leclerc, M.-A. \emph{The hyperbolic formal affine Demazure algebra},
    Unpublished doctoral thesis, University of Ottawa, (2016).

\bibitem[LZ]{LZ} Lenart, C., Zainoulline, K. \emph{Towards generalized
    cohomology Schubert calculus via formal root polynomials},
    arXiv:1408.5952.
 
\bibitem[LZ1]{LZ1}    
 Lenart, C., Zainoulline, K.
 \emph{A Schubert basis in equivariant elliptic cohomology },   
    arXiv:1508.03134.

\bibitem[MP]{MP} Moody, R. V., Pianzola, A., \emph{Lie algebras with
    triangular decompositions}, Canadian Mathematical Society Series of
    Monographs and Advanced Texts, A Wiley-Interscience Publication, John
    Wiley and Sons, Inc., New York, (1995).
    
\bibitem[R]{R} R\'{e}my, B., \emph{Groupes de Kac-Moody d\'{e}ploy\'{e}s et presque d\'{e}ploy\'{e}s}, (French. English, French summary) [Split and almost split Kac-Moody groups],Astérisque No. 277, (2002).

\bibitem[Tits]{T} Tits, J., \emph{Uniqueness and presentation of Kac-Moody
    groups over fields}, J. Algebra 105, no. 2, 542--573, (1987).

\bibitem[Vin]{V} Vinberg, \`{E}. B., \emph{Discrete linear groups that are
    generated by reflections}, (Russian) Izv. Akad. Nauk SSSR Ser. Mat. 35 (1971), 1072–1112. 


\end{thebibliography}

\end{document}